\newtheorem{theorem}{Theorem}[section]
\newtheorem{thm}[theorem]{Theorem}
\newtheorem{lem}[theorem]{Lemma}
\newtheorem{prop}[theorem]{Proposition}
\newtheorem{claim}[theorem]{Claim}
\theoremstyle{definition}
\newtheorem{defn}[theorem]{Definition}
\newtheorem{rem}[theorem]{Remark}
\newtheorem{notat}[theorem]{Notation}
\newtheorem{equa}[theorem]{Equation}
\theoremstyle{remark}
\newsavebox{\sembox}
\newlength{\semwidth}
\newlength{\boxwidth}
\newsavebox{\semrbox}
\newlength{\semrwidth}
\newlength{\boxrwidth}
\title
{On Base Change of Local Stability in Positive Characteristics}
\author[Hu]{Zhi Hu}
\address{Zhi Hu,
Institut f\"{u}r Mathematik, Johannes Guttenberg Universit\"{a}t Mainz, Raum 05-131, Staudingerweg 9, Mainz, Deutschland, 55128}
\email{huz@uni-mainz.de}
\author[Zong]{Runhong Zong}
\address{Runhong Zong,
Department of Mathematics,
Nanjing University,
204 Meng Minwei Building,
No.22 Hankou Road, Nanjing, Jiangsu, China, 210093}
\email{rzong@nju.edu.cn}
\date{\today}
\begin{document}

%%%%%%%%%%%%%%%%%%%%%%%%%%%%%%%%%%%%%%%%%%%%%%%%%%%%%%%%%%%%%%%%%%%%
%%
%% Abstract
%%
%%%%%%%%%%%%%%%%%%%%%%%%%%%%%%%%%%%%%%%%%%%%%%%%%%%%%%%%%%%%%%%%%%%%

\begin{abstract}
We prove that a pointed one dimensional family of varieties $\mathcal{X}\to {b\in B}$ in positive characteristics is locally stable iff the log pair $(\mathcal{X'}, \mathcal{X}'_{b'})$ arising from its base change to the perfectoid base $b'\in B_{perf}$ is log canonical.
\end{abstract}

%%%%%%%%%%%%%%%%%%%%%%%%%%%%%%%%%%%%%%%%%%%%%%%%%%%%%%%%%%%%%%%%%%%%%%
%%
%% Body
%%
%%%%%%%%%%%%%%%%%%%%%%%%%%%%%%%%%%%%%%%%%%%%%%%%%%%%%%%%%%%%%%%%%%%%%%

\maketitle

\tableofcontents

%%%%%%%%%%%%%%%%%%%%%%%%%%%%%%%%%%%%%%%%%%%%%%%%%%%%%%%%%%%%%%%
%%
%% Section: Introduction
%%
%%%%%%%%%%%%%%%%%%%%%%%%%%%%%%%%%%%%%%%%%%%%%%%%%%%%%%%%%%%%%%%

\section{Introduction}
Over an algebraically closed field with characteristic $0$, the following notion of local stability is raised in the standard {\emph{Minimal Model Program}}\cite{KM}\cite{K2}\cite{K1}.

\begin{defn}\label{localstable}[Koll\'ar]
Let $k=\bar{k}$ be an algebraically closed field with $char\ k=0$, and let $\mathcal{X}\to b \in B$ be a pointed one-dimensional family of varieties over $k$, then the family is locally stable iff the log pair $(\mathcal{X}, \mathcal{X}_b)$ is log canonical.
\end{defn}

\begin{rem} Clearly, a well-defined notion of stability condition should behave well (esp.~be invariant) under various base changes. In characteristic $0$, the well-behavedness of the above notion of local stability under various base changes is guaranteed by {\emph{Inversion of Adjunction}}\cite{KM}\cite{K2}. Especially, by {\emph{Inversion of Adjunction}}, that the log pair $(\mathcal{X}, \mathcal{X}_b)$ is log canonical is equivalent to that the log special fiber $(\mathcal{X}_b, D_b)$ with $D_b$ as the singular locus of $\mathcal{X}_b$ is semi log canonical. Since the log special fiber $(\mathcal{X}_b, D_b)$ is invariant under various base changes, we know that the notion of local stability is well-behaved (esp.~invariant)  under various base changes in characteristic $0$.
\end{rem}

Over an algebraically closed field with positive characteristic, the naive generalization of the above notion hardly works since currently we still don't have {\emph{Inversion of Adjunction}} in positive characteristics---neither do we know whether any kinds of {\emph{Inversion of Adjunction}} exist in positive characteristics. Especially, in positive characteristics we even still  don't have resolution of singularities, which renders much of the techniques of {\emph{Minimal Model Program}} unavailable. So we make the following tentative notion of local stability in positive characteristics.

\begin{defn}\label{localstable}
Let $k=\bar{k}$ be an algebraically closed field with $char\ k > 0$, and let $\mathcal{X}\to b \in B$ be a pointed one-dimensional family of varieties over $k$, then the family is locally stable iff for any base change $b'\in B' \to b\in B$, the log pair $(\mathcal{X'}, \mathcal{X'}_{b'})$ where $\mathcal{X'} = \mathcal{X}\times_{B} B'$ is log canonical.
\end{defn}

Except the part concerning log canonicity that parallels the zero characteristic case, our notion of local stability in positive characteristics is essentially a statement that the stability condition should be invariant under various base changes. The main technical result of this paper is the following Proposition \ref{wild}.

\begin{prop}\label{wild}
Let $k=\bar{k}$ be an algebraically closed field with $char\ k = p > 0$, and let $\mathcal{X}\to b \in B$ be a pointed one-dimensional family of varieties over $k$. Assume that $b' \in B' \to b \in B$ is a degree $p$ cover with Wild Ramification over $b$. Then $(\mathcal{X}',\mathcal{X'}_{b'})$ where $\mathcal{X}'  = \mathcal{X}\times_{B} B'$ is log canonical, if $(\mathcal{X''}, \mathcal{X''}_{b''})$ is log canonical for any Purely Inseparable base change $b'' \in B''\to b \in B$ and $\mathcal{X}''  = \mathcal{X}\times_{B} B''$.
\end{prop}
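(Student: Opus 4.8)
The plan is to reduce everything to the complete local ring at $b$ and exploit the Artin--Schreier description of the wild cover, then trivialize that cover over the perfection, which is the colimit of the purely inseparable covers appearing in the hypothesis. First I would localize and pass to completions: since log canonicity can be checked on the completed local rings of an excellent scheme, it suffices to work with $\widehat{\mathcal{O}}_{B,b}\cong k[[t]]$ and the completion of $\mathcal{X}$ along $\mathcal{X}_b$. A degree $p$ Galois cover in characteristic $p$ that is ramified is necessarily wildly ramified, and by Artin--Schreier theory the cover $B'\to B$ is locally of the form $y^p-y=f$ with $f\in k((t))$ having a pole of order $m$, $p\nmid m$; the reduced fibre satisfies $\pi^{*}\mathcal{X}_b=p\,\mathcal{X}'_{b'}$, where $\pi\colon\mathcal{X}'\to\mathcal{X}$ is the induced finite degree $p$ morphism, and the different exponent of $B'/B$ is $(m+1)(p-1)$.

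The key point is that a single finite purely inseparable base change does not lower the conductor $m$, whereas the full perfection does. Over $B_{\mathrm{perf}}$, whose valuation ring has value group $\mathbb{Z}[1/p]$, I would run Artin--Schreier reduction: using that $k=\bar k$ is perfect, write $t^{-m}=(t^{-m/p})^{p}\equiv t^{-m/p}\pmod{\wp}$, where $\wp(z)=z^{p}-z$ and $t^{-m/p}=(t^{1/p})^{-m}$ now lies in the perfection, and iterate so that the pole order drops as $m/p^{n}\to 0$. Hence $f$ becomes a coboundary and the cover splits: $\mathcal{X}'\times_{B}B_{\mathrm{perf}}$ becomes an \'etale $\mathbb{Z}/p$-torsor over a base whose residue field $k=\bar k$ is separably closed, i.e.\ a disjoint union of $p$ copies of $\mathcal{X}_{\mathrm{perf}}:=\mathcal{X}\times_{B}B_{\mathrm{perf}}$. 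Because $B_{\mathrm{perf}}=\varinjlim B''$ is the colimit of the finite purely inseparable covers $B''\to B$, the hypothesis that every $(\mathcal{X}'',\mathcal{X}''_{b''})$ is log canonical should pass to the limit to yield that the perfected pair $(\mathcal{X}_{\mathrm{perf}},\mathcal{X}_{\mathrm{perf},b})$ is log canonical.

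It then remains to descend: the splitting identifies the perfection of $(\mathcal{X}',\mathcal{X}'_{b'})$ with copies of the log canonical perfected pair above, and I would conclude by invoking that log canonicity is insensitive to perfection, so that $(\mathcal{X}',\mathcal{X}'_{b'})$ is log canonical iff its perfection is. The main obstacle is precisely this interchange of log canonicity with the perfection/colimit: one must make sense of log canonicity over the non-Noetherian perfect (perfectoid) base and show both that the finite-level hypotheses assemble to the limit statement and that the limit statement descends back to the finite wild cover. This is exactly where the perfectoid framework is needed, and where the characteristic $p$ failure of the naive finite-pullback behaviour of log canonicity---which would otherwise let one argue purely through the discrepancy identity $K_{\mathcal{X}'}+\mathcal{X}'_{b'}=\pi^{*}\!\big(K_{\mathcal{X}}+\tfrac{(m+1)(p-1)+1}{p}\mathcal{X}_b\big)$---is circumvented.
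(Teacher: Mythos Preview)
Your central claim---that the wildly ramified Artin--Schreier cover trivializes over the perfection $B_{\mathrm{perf}}$---is false, and the whole strategy collapses with it. The perfection $k((t^{1/p^\infty}))=\bigcup_n k((t^{1/p^n}))$ is a \emph{purely inseparable} extension of $k((t))$, while the Artin--Schreier extension $L/k((t))$ is \emph{separable}; hence they are linearly disjoint and $L\otimes_{k((t))}k((t^{1/p^\infty}))$ is still a field of degree $p$ over the perfection. Concretely, your reduction $t^{-m}\equiv t^{-m/p^n}\pmod{\wp}$ is correct at every finite stage, but the ``limit'' $w=\sum_{n\ge 1}t^{-m/p^n}$ does not lie in $\bigcup_n k((t^{1/p^n}))$: any element of the union lives in some fixed $k((t^{1/p^N}))$, and there one checks directly (substitute $s=t^{1/p^N}$, reduce $s^{-mp^N}$ via $\wp$ down to $s^{-m}$ with $\gcd(m,p)=1$) that $t^{-m}\notin\wp\big(k((t^{1/p^N}))\big)$. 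Equivalently, the absolute Galois group of a field is unchanged by perfection, so the wild inertia does not disappear. Thus $\mathcal{X}'\times_B B_{\mathrm{perf}}$ is not a disjoint union of copies of $\mathcal{X}_{\mathrm{perf}}$, and there is nothing to descend.

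The paper's proof proceeds along entirely different lines: it is a direct, case-by-case discrepancy computation. One writes the cover in the normal form $r=u^p+v\,u^{p+s}+\cdots$ (Lemma~\ref{conductor}), derives $K_{\mathcal{X}'}+\Delta'=g_{\mathcal{X}}^*(K_{\mathcal{X}}+\Delta)+s\,\Delta'$ (Proposition~\ref{X'}), and then for each divisorial valuation $E$ over $\mathcal{X}$ compares $a(E',\mathcal{X}',\Delta')$ with $a(E,\mathcal{X},\Delta)$ by explicitly normalizing the equation $u^p+\cdots = x_E^{t_E}(\cdots)$. When $\gcd(t_E,p)=1$ a Euclidean-style blow-up algorithm gives $a(E',\mathcal{X}',\Delta')+1=p\,(a(E,\mathcal{X},\Delta)+1)$. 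When $p\mid t_E$ one cannot proceed directly; the hypothesis on purely inseparable base changes is used precisely here (Claim~\ref{claim}) to replace $(\mathcal{X},\Delta)$ by a suitable purely inseparable pullback on which the local expansion of $r$ takes a standard shape, after which an induction on an auxiliary invariant $\mathcal{I}$ finishes the argument. No passage to a limit or perfectoid object occurs in the proof itself; the perfectoid statement (Theorem~\ref{perf}) is a corollary, not a tool.
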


\begin{rem}\label{bc}
In our situation here, the only essentially new base changes in positive characteristics compared to the zero characteristic case are {\emph{Purely Inseparable}} base changes and {\emph{Wildly Ramified}} base changes. In particular, the case of {\emph{Tamely Ramified}} base changes is trivial in our situation---the reader can check by a simple and direct computation that if $b' \in B' \to b \in B$ is a cover with {\emph{Tame Ramification}} over $b$, then $(\mathcal{X}',\mathcal{X'}_{b'})$ where $\mathcal{X}'  = \mathcal{X}\times_{B} B'$ is log canonical iff $(\mathcal{X}, \mathcal{X}_b)$ is log canonical.
\end{rem}

Now consider the perfectoid\cite{P} base $B_{perf}$ which comes from adding all the $p^{n}$-th roots of the local parameter $u$ of $b$ to $B$. In particular, for any $n\in \mathbb{N}$, let $B_n$ denote the new base which comes from adding the $p^{n}$-th root of the local parameter $u$ of $b$ to $B$, then the perfectoid base $B_{perf}$ is the inverse limit of $\{B_n\}_{n\in \mathbb{N}}$. And
$\mathcal{X'}=  \mathcal{X}\times_{B} B_{perf}$ is the inverse limit of $\{\mathcal{X}_n\}_{n\in \mathbb{N}}$ with $\mathcal{X}_n=\mathcal{X}\times_{B} B_n$ for each $n\in \mathbb{N}$. Assume the point in $B_{perf}$ over $b \in B$ is $b'$, and the point in $B_n$ over $b \in B$ is $b_n$ for each $n\in \mathbb{N}$, and we define the log canonicity of the pair $(\mathcal{X'}, \mathcal{X}'_{b'})$ through the obvious limiting procedure which depends on the log canonicity of the various pairs $(\mathcal{X}_n, {\mathcal{X}_n}_{b_n})$ for $n\in \mathbb{N}$, then by Remark \ref{bc} above, Proposition \ref{wild} implies the following Theorem \ref{perf}.

\begin{thm}\label{perf}
Let $k=\bar{k}$ be an algebraically closed field with $char\ k > 0$, and let $\mathcal{X}\to b \in B$ be a pointed one-dimensional family of varieties over $k$, let $b'\in B_{perf}$ be the perfectoid base of $b\in B$, then the family $\mathcal{X}\to b \in B$ is locally stable iff the log pair $(\mathcal{X'}, \mathcal{X'}_{b'})$ where $\mathcal{X}'  = \mathcal{X}\times_{B} B_{perf}$ is log canonical.
\end{thm}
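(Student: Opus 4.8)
The plan is to read the theorem off from the classification of base changes in Remark~\ref{bc} and from Proposition~\ref{wild}, with the perfectoid base serving as the simultaneous receptacle for every purely inseparable base change. I will work with the complete local rings at $b$ and at the point above it and set $K=\mathrm{Frac}(\mathcal{O}_{B,b})$, with uniformizer $u$; since $k=\bar k$ is perfect, one checks that $[K:K^p]=p$, and hence that every finite purely inseparable extension of $K$ is simple and equals $k((u^{1/p^n}))$ for some $n$. In other words the covers $B_n\to B$ are \emph{all} of the purely inseparable base changes, so the limiting definition of log canonicity over $B_{perf}$, which records log canonicity of $(\mathcal{X}_n,(\mathcal{X}_n)_{b_n})$ for every $n$, is exactly the statement that $(\mathcal{X}'',\mathcal{X}''_{b''})$ is log canonical for every purely inseparable base change $B''\to B$.

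The forward implication is then immediate: if the family is locally stable, Definition~\ref{localstable} makes $(\mathcal{X}_n,(\mathcal{X}_n)_{b_n})$ log canonical for every $n$ because each $B_n\to B$ is a purely inseparable base change, and this is precisely log canonicity of $(\mathcal{X}',\mathcal{X}'_{b'})$ in the limit.

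For the converse I will assume $(\mathcal{X}',\mathcal{X}'_{b'})$ is log canonical, so by the previous paragraph every purely inseparable base change of $B$ has a log canonical total space. Let $B'\to B$ be an arbitrary base change; passing to the normal (Galois) closure of $L=\mathrm{Frac}(\mathcal{O}_{B',b'})$ over $K$, and deferring the descent back to $B'$, I may assume $L/K$ is normal and write $K\subseteq K_i\subseteq L$ with $K_i/K$ purely inseparable — so $K_i=k((u^{1/p^n}))$ for some $n$ — and $L/K_i$ Galois. The lower step is one of the $B_n$ and is log canonical by hypothesis. Over the intermediate trait $B_n$ the family $\mathcal{X}_n$ inherits the same control, since its purely inseparable base changes are the $B_m$ with $m\ge n$, all log canonical by perfectoid log canonicity. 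The Galois extension $L/K_i$ is totally ramified, and its inertia is an extension of a cyclic prime-to-$p$ group by a wild $p$-Sylow, so $L/K_i$ is a composite of a tamely ramified cover and finitely many degree $p$ wildly ramified covers. Applying Remark~\ref{bc} to the tame part and Proposition~\ref{wild} to each wild degree $p$ part — the latter with hypothesis supplied by the control just noted over $B_n$ — gives log canonicity of the pair over $L$; as $B'\to B$ was arbitrary, this is local stability.

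The main obstacle will be to carry out this last reduction coherently rather than merely formally. Three points need genuine care: that log canonicity descends from the normal (Galois) closure back to the original, possibly non-normal, cover $B'$, so that the reduction to normal $L/K$ is legitimate; that Remark~\ref{bc} and Proposition~\ref{wild} apply verbatim over each intermediate trait $B_n$, and in particular that the hypothesis of Proposition~\ref{wild} — log canonicity for all purely inseparable base changes of the family at the current stage — persists after each tame or wild degree $p$ step as one climbs the tower; and that the limiting definition of log canonicity over $B_{perf}$ be fixed so that it is literally equivalent to log canonicity of all the pairs $(\mathcal{X}_n,(\mathcal{X}_n)_{b_n})$. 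Once these compatibilities are established the theorem follows formally from Remark~\ref{bc} and Proposition~\ref{wild}, exactly as asserted.
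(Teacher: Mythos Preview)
Your outline is correct and matches the paper's own (one–sentence) derivation: the theorem is simply read off from Remark~\ref{bc} and Proposition~\ref{wild}, using that the perfectoid tower exhausts the purely inseparable base changes of $k((u))$ and that log canonicity over $B_{perf}$ is \emph{defined} as log canonicity over every $B_n$. Your identification of the three delicate points is exactly right, and point~(b) --- that the hypothesis ``log canonical after every purely inseparable base change'' is inherited at each intermediate stage --- is the substance of the argument; it holds because Frobenius twists commute with tame and wild degree $p$ steps, so the purely inseparable base changes of any intermediate trait are again reached from some $B_n$ by a tame step and a chain of wild degree $p$ steps already handled.

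One refinement: the passage to the Galois closure is an avoidable detour that creates your concern~(a). You do not need it. Over $K=k((u))$ with $k$ perfect one has $K\subset K_i\subset L$ with $K_i=k((u^{1/p^n}))$ purely inseparable and $L/K_i$ separable, as you say. Now take $M$ the maximal tame subextension of $L/K_i$, so $L/M$ is separable, totally ramified, of $p$-power degree. The Galois closure $\widetilde L/M$ of $L/M$ still has $p$-power degree (it lies in the maximal pro-$p$ extension of $M$), so $\mathrm{Gal}(\widetilde L/M)$ is a $p$-group and the subgroup $\mathrm{Gal}(\widetilde L/L)$ sits in a maximal chain of subgroups of index $p$. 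This produces a tower $M=L_c\subset L_{c-1}\subset\cdots\subset L_0=L$ with each step separable of degree $p$, hence wildly ramified. Applying Remark~\ref{bc} to $M/K_i$ and Proposition~\ref{wild} to each $L_{i-1}/L_i$, with hypothesis supplied by your point~(b), yields log canonicity for $L$ itself; no descent from $\widetilde L$ is required.
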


Theorem \ref{perf}  above means that our notion of local stability in positive characteristics works well if we lift to the perfectoid base $b'\in B_{perf}$ and require log canonicity there.

{\emph{Acknowledgement}}: the authors would like to thank Prof.~Jason Starr, Prof.~Zhiyu Tian, Prof.~Claire Voisin and Prof.~Kang Zuo for helpful comments about this work.

\section{Preliminaries}
Notation as in the statement of Proposition \ref{wild}, firstly we have the following Lemma \ref{conductor}.
\begin{lem}\label{conductor}
Notation as in the statement of Proposition \ref{wild}, for any local parameter $r$ of $b \in B$, there is a local parameter $u$ of $b' \in B'$ which is unique up to isomorphism such that the cover $B' \to B$ is given by the following equation $$r=u^p+v\cdot u^{p+s}+\{higher\ order\ terms\ in\ u\}$$ where $v\in k-\{0\}$, $s\in \mathbb{N}$, $s>0$ and $gcd(s,p)=1$.
$s$ is unique and we call it the $conductor$ of the $p$-th wildly ramified base change.
\end{lem}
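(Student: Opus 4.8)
The plan is to pass to completions and reduce the statement to a normal-form result for a single power series, after which the conductor will emerge as the first term of $r$ that cannot be absorbed into a $p$-th power. Since $b' \in B' \to b \in B$ is a totally (wildly) ramified degree $p$ extension of regular one-dimensional local rings with residue field $k=\bar k$, completing gives $\whtOO_{B,b}\cong k[[r]]$ and $\whtOO_{B',b'}\cong k[[t]]$ for some uniformizer $t$, and the cover is encoded by the image of $r$, namely a power series $f(t)=r$ with $v_t(r)=p$ (the ramification index equals $p$). Writing $r = a_p t^p + \{\text{higher order}\}$ with $a_p\neq 0$ and rescaling $t$ by a $p$-th root of $a_p^{-1}$ --- available because $k$ is algebraically closed, hence perfect --- I may assume from the outset that $r = t^p + \sum_{i>p} a_i t^i$.

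The engine of the whole argument is the characteristic-$p$ identity $(x+y)^p = x^p + y^p$, which I will use in the form: any term of $r$ sitting in a degree divisible by $p$ can be absorbed into the leading $p$-th power by a change of uniformizer. Concretely, if the lowest-degree correction beyond $t^p$ occurs at a degree $mp$ divisible by $p$, then setting $u = t + a_{mp}^{1/p}\,t^{m}$ yields $u^p = t^p + a_{mp}\,t^{mp}$ exactly, so $r-u^p$ has strictly larger $t$-adic valuation; rewriting $r$ in the new uniformizer $u$ (legitimate since $u = t + \cdots$) eliminates this term without disturbing the lower-degree coefficients. Iterating, I process the corrections in increasing degree, absorbing every term whose degree is divisible by $p$, and I stop the first time the lowest remaining correction sits in a degree $p+s$ with $p\nmid(p+s)$, i.e. $\gcd(s,p)=1$; at that moment $r = u^p + v\,u^{p+s} + \{\text{higher order terms in } u\}$ with $v\neq 0$, which is precisely the asserted form. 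The process terminates after finitely many steps because wild ramification forces the extension to be separable, so $r$ is not a $p$-th power in $k[[t]]$; were the algorithm to run forever, the uniformizers would converge $t$-adically to some $u_\infty$ with $r=u_\infty^p$, a contradiction.

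For the uniqueness of the conductor I will give an intrinsic description that manifestly does not depend on the chosen uniformizer or on the steps of the algorithm: I claim $p+s = \sup_{z\in \whtOO_{B',b'}} v_t(r-z^p)$. Indeed, since $k$ is perfect the $p$-th powers $z^p$ are exactly the power series supported in degrees divisible by $p$, so the best one can do is match every coefficient of $r$ in a degree divisible by $p$, and the supremum equals the least degree of $r$ coprime to $p$ carrying a nonzero coefficient; this is finite precisely because $r$ is not a $p$-th power, and coprime to $p$ by construction. The same computation re-proves $\gcd(s,p)=1$: if the optimal approximation left a leading remainder in a degree $mp$ divisible by $p$, the substitution $z\mapsto z + a_{mp}^{1/p}\,t^{m}$ (with $a_{mp}$ the leading coefficient of the remainder) would strictly raise the valuation, contradicting optimality. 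Because $v_t$ and $r$ are intrinsic to the extension, $s = \sup_z v_t(r-z^p)-p$ is an invariant, giving uniqueness of $s$; moreover any two optimal approximations $z,z'$ differ by $(z-z')^p$ of valuation $>p+s$, so $z\equiv z'\ (\mathrm{mod}\ \mathfrak{m}^2)$, which is the sense in which the uniformizer $u$ realizing the normal form is unique up to isomorphism.

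I expect the only genuine obstacle to be the clean bookkeeping around the absorption step --- verifying that rewriting $r$ in each successive uniformizer never disturbs the already-cleaned lower-degree coefficients, and that the leading coefficient $v$ of the conductor term is intrinsic (it survives every higher-order change of variable, since a term $\lambda^p u^{mp}$ produced from $u\mapsto u+\lambda u^m$ can never land in degree $p+s$, as that would force $p\mid s$). Everything else reduces to the single identity $(x+y)^p=x^p+y^p$ together with the existence of $p$-th roots in $k=\bar k$.
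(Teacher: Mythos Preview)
Your proof is correct and takes a genuinely different route from the paper's. The paper starts from Artin--Schreier theory, writing the degree-$p$ separable extension as $T^p - a^{p-1}T + c = 0$, then massages the valuations of $a$ and $c$ via substitutions $T\mapsto T/r$, $T\mapsto T+r^l$ until the Implicit Function Theorem produces the uniformizer $u$; uniqueness of $s$ is read off from the ramification formula for $K_{B'}$ over $g^*K_B$. You instead work directly in the completion $k[[t]]$ and run a successive-approximation scheme driven entirely by $(x+y)^p = x^p + y^p$ and perfectness of $k$, absorbing each correction of $p$-divisible degree into the leading $p$-th power; for uniqueness you give the intrinsic characterization $p+s = \sup_{z} v_t(r - z^p)$, which is equivalent to computing $v_t(dr/dt)+1$ (hence the different again) but obtained without differentials. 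Your argument is more elementary and fully self-contained, and your termination step via $t$-adic convergence to a hypothetical $u_\infty$ with $r=u_\infty^p$ is cleaner than the paper's appeal to separability. The paper's approach, on the other hand, has the virtue of immediately tying $s$ to the log-ramification formula $K_{\mathcal{X}'}+\Delta' = g_{\mathcal{X}}^*(K_{\mathcal{X}}+\Delta)+s\cdot\Delta'$ that drives the rest of the article.
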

\begin{proof}
By Artin-Schreier theory\cite{lang}, there are $a,c\in K(B)$ such that the field extension $K(B')/K(B)$ is defined by the equation $$T^p-a^{p-1}T+c=0.$$ Multiply both sides by $r^{lp}$ for $l\gg 0$ if necessary, we may assume that $a,c \in \mathcal{O}_{B,b}$. Since the base change is wildly ramified over the point $b$, we should have $a,c\in (r) \mathcal{O}_{B,b}$. Let $a=u_a \cdot r^{i},c=u_c \cdot r^j$, with $u_a, u_c$ as units. Keep doing the transformations $c\to c/r^p$, $a \to a/r, T\to T/r$ and $T\to T+r^l$ if necessary, we may assume that $1\leq j <p$.

If $j=1$, then we can finish the proof by picking $u$ as a suitable element in $(T)k[[T]]-(T^2)k[[T]]$. Else we can suppose $Nj=Mp+1$ with $N>0$ and $M>0$.  Then one has $$r=(v\cdot T^N/r^M)^p\big(1+\{higher \ order \ terms \ in \  T^N/r^M\}\big)$$ with $v\in k-\{0\}$ by the \emph{Implicit Function Theorem}. Then we can pick $u$ as a suitable element in $ (T^N/r^M)k[[T^N/r^M]]-(T^N/r^M)^2k[[T^N/r^M]]$. The non-existence of $s$ will contradict the separableness of the base change, and uniqueness of $s$ follows immediately from the following formula $$K_B'=g^{*} K_B+(s-1)\cdot b'.$$
\end{proof}

Notation as in the statement of Proposition \ref{wild}, by Lemma \ref{conductor} we may assume that the cover $b'\in B' \to b \in B$ is given by the following equation $$r=u^p+v\cdot u^{p+s}+\{higher\ order\ terms\ in\ u\}$$ where $v\in k-\{0\}$, $s\in \mathbb{N}$, $s>0$ and $gcd(s,p)=1$. This implies the following Proposition \ref{X'}.

\begin{prop}\label{X'}
Notation as in the statement of Proposition \ref{wild}, denote $\mathcal{X}_{b}$ and $\mathcal{X'}_{b'}$ respectively by $\Delta$ and $\Delta'$, then we have $$K_{\mathcal{X}'}+\Delta'=g_{\mathcal{X}}^{*}(K_{\mathcal{X}}+\Delta) +s\cdot \Delta'$$ where $g_{\mathcal{X}}$ denotes the morphism $\mathcal{X}' =\mathcal{X}\times_{B} B' \to \mathcal{X}$.
\end{prop}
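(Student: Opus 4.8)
The plan is to reduce the whole computation to the base curve $B$, where the effect of a degree $p$ wildly ramified cover on the log pair $(B,b)$ can be read off directly from the normal form supplied by Lemma \ref{conductor}. Write $\pi\colon\mathcal{X}\to B$ and $\pi'\colon\mathcal{X}'\to B'$ for the two families, so that the defining Cartesian square gives $\pi\circ g_{\mathcal{X}}=g_B\circ\pi'$, where $g_B\colon B'\to B$ is the cover and $g_{\mathcal{X}}$ is its base change. The starting point is that $\pi$ is flat, so the relative dualizing sheaf commutes with the base change $g_B$: one has $\omega_{\mathcal{X}'/B'}\cong g_{\mathcal{X}}^{*}\omega_{\mathcal{X}/B}$, that is $K_{\mathcal{X}'/B'}=g_{\mathcal{X}}^{*}K_{\mathcal{X}/B}$ as $\mathbb{Q}$-Cartier classes.

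Next I would rewrite both log canonical classes relatively. Since $\Delta=\mathcal{X}_b=\pi^{*}b$ and $\Delta'=\mathcal{X}'_{b'}=\pi'^{*}b'$, we have
$$K_{\mathcal{X}}+\Delta=K_{\mathcal{X}/B}+\pi^{*}(K_B+b),\qquad K_{\mathcal{X}'}+\Delta'=K_{\mathcal{X}'/B'}+\pi'^{*}(K_{B'}+b').$$
Applying $g_{\mathcal{X}}^{*}$ to the first identity, using $K_{\mathcal{X}'/B'}=g_{\mathcal{X}}^{*}K_{\mathcal{X}/B}$ together with $g_{\mathcal{X}}^{*}\pi^{*}=\pi'^{*}g_B^{*}$, and subtracting, every term living on the total space cancels and the difference descends to a pullback from $B'$:
$$(K_{\mathcal{X}'}+\Delta')-g_{\mathcal{X}}^{*}(K_{\mathcal{X}}+\Delta)=\pi'^{*}\big[(K_{B'}+b')-g_B^{*}(K_B+b)\big].$$

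It then remains to evaluate the bracketed log discrepancy on the base, and this is exactly where the conductor $s$ enters. Here I would use the normal form $r=u^{p}+v\,u^{p+s}+\{\text{higher order terms in }u\}$ from Lemma \ref{conductor}. Two local computations are needed. First, $g_B^{*}b=p\,b'$, because $g_B^{*}r=u^{p}(1+\cdots)$ has order $p$ at $b'$, so $b'-g_B^{*}b=(1-p)b'$. Second, the different of $g_B$, read off from $dr=\big(p\,u^{p-1}+(p+s)v\,u^{p+s-1}+\cdots\big)\,du$: the characteristic $p$ phenomenon is precisely that the leading term $p\,u^{p-1}$ vanishes, while $(p+s)v=s\,v\neq0$ since $\gcd(s,p)=1$, whence $\operatorname{ord}_{b'}(dr/du)=p+s-1$ and $K_{B'}=g_B^{*}K_B+(p+s-1)\,b'$. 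Combining the two, $(K_{B'}+b')-g_B^{*}(K_B+b)=(p+s-1)b'+(1-p)b'=s\,b'$, and pulling back by $\pi'^{*}$ gives $s\cdot\Delta'$, which is the asserted identity $K_{\mathcal{X}'}+\Delta'=g_{\mathcal{X}}^{*}(K_{\mathcal{X}}+\Delta)+s\cdot\Delta'$.

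The main obstacle is the base change statement $\omega_{\mathcal{X}'/B'}\cong g_{\mathcal{X}}^{*}\omega_{\mathcal{X}/B}$ together with the relative identifications $K_{\mathcal{X}}+\Delta=K_{\mathcal{X}/B}+\pi^{*}(K_B+b)$. These require that $\mathcal{X}$ be normal with $K_{\mathcal{X}}+\Delta$ $\mathbb{Q}$-Cartier, that $\pi$ be flat with reduced special fibre so that $\Delta=\pi^{*}b$ holds as Weil divisors, and that relative duality be available for the possibly singular total space; one should work with the relative dualizing complex, or assume $\pi$ Gorenstein, to guarantee that $\omega_{\mathcal{X}/B}$ commutes with the base change. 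Once these structural points are secured, the differential computation on $B$ is routine, and the conductor $s$ emerges exactly as the log discrepancy, in harmony with Remark \ref{bc} where the analogous computation in the tame case produces discrepancy $0$.
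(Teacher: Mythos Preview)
Your argument is correct, and the core differential computation $dr=s\,v\,u^{p+s-1}\,du+\cdots$ is exactly what drives the paper's proof as well. The route you take is, however, organized differently from the paper's. The paper works directly on the total space at a general point of $\Delta$: since $\mathcal{O}_{\mathcal{X}}/(r)$ is reduced, one can complete $r$ to a local coordinate system $(r,x_2,\dots,x_n)$, note that $\mathcal{X}'$ is, by the implicit function theorem, simply the re\-para\-metrized chart $(u,x_2,\dots,x_n)$, and read the identity off from the single line $(dr/r)\wedge dx_2\wedge\cdots\wedge dx_n = u^{s-1}\,du\wedge dx_2\wedge\cdots\wedge dx_n$. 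You instead split off the relative canonical class, invoke base change $\omega_{\mathcal{X}'/B'}\cong g_{\mathcal{X}}^{*}\omega_{\mathcal{X}/B}$, and push the whole computation down to the one-dimensional base, where it becomes the ramification formula $K_{B'}=g_B^{*}K_B+(p+s-1)b'$ combined with $g_B^{*}b=p\,b'$. Both approaches arrive at $s\cdot\Delta'$ from the same derivative; the paper's is shorter and completely elementary, while yours is more structural and makes transparent \emph{why} the answer is pulled back from $B'$. The obstacle you flag about base change of $\omega_{\mathcal{X}/B}$ is not really an issue here: the asserted equality is an identity of Weil divisors, hence needs checking only at the generic point of $\Delta'$, where everything is smooth---which is precisely the reduction the paper makes by working ``at a general point $x\in\mathcal{X}$ lying over $b$''.
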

\begin{proof}
 From the assumptions we know that $\mathcal{O_{\mathcal{X}}}/(r)$ is reduced. So at a general point $x\in \mathcal{X}$ lying over $b$, we can find $n-1$ functions on $\mathcal{X}_b$ which, together with $r$, form a local coordinate system $(r,x_2,...,x_n)$. Locally $\mathcal{X}'$ is simply the normalization of the following equation $$r=u^p+v\cdot u^{p+s}+\{higher\ order\ terms\ in\ u\},$$ which simply corresponds to a re-parametrization $(r,x_2,...,x_n)\to(u,x_2,...,x_n)$ by the \emph{Implicit Function Theorem}. The conclusion then follows from $$(dr/r)\wedge dx_2 \wedge d x_3 ...\wedge d x_n=u^{s-1}\cdot du\wedge dx_2 ...\wedge dx_n.$$
\end{proof}

Notation as in the statement of Proposition \ref{wild}, now let $v_E$ be a divisorial valuation of $K(\mathcal{X})$ with $$ E \subset Y \stackrel{f}{\longrightarrow} \mathcal{X}$$ a proper bi-rational morphism from a normal $k$-Variety $Y$ to $\mathcal{X}$, and $E$ is an irreducible divisor in $Y$. Then by the assumptions of Proposition \ref{wild}, we have $$K_{Y}=f^{*}(K_\mathcal{X}+\Delta) +a(E,\mathcal{X},\Delta)\cdot E$$ with $a(E,\mathcal{X},\Delta)\geq -1$. Let $Y'$ be the normalization of $Y\times_{B} B'$, and let $E' \in Y'$ be the corresponding divisor in $Y'$ lying over $E$. We denote the morphism $Y'\to Y$ by $g_Y$, and denote the morphism $Y' \to \mathcal{X}'$ by $f'$. Around a general point $e\in E \subset Y$ and a general point $e'\in E' \subset Y'$ lying over $e$, we have the following commutative diagram
$$\begin{CD}
e'\in E' \subset Y'  @>f'>> \mathcal{X}' @>\pi'>> b' \in B' \\
@VVg_{Y}V                  @VVg_{\mathcal{X}}V   @VVgV\\
e \in E \subset Y    @>f >> \mathcal{X}  @>\pi>> b  \in B.
\end{CD}
$$
Denote $\mathcal{X}_{b}$ and $\mathcal{X'}_{b'}$ respectively by $\Delta$ and $\Delta'$, then we also have the following formula
 $$K_{Y'}=f'^{*}(K_{\mathcal{X}'}+\Delta')+a(E',\mathcal{X}',\Delta')\cdot E'$$
$$=f'^{*}\big(g_{\mathcal{X}}^{*}(K_{\mathcal{X}}+\Delta)+s\cdot \Delta'\big)+a(E',\mathcal{X}',\Delta')\cdot E'$$
$$=g_Y^{*}f^{*}(K_{\mathcal{X}}+\Delta)+s\cdot f'^{*} \Delta'+a(E',\mathcal{X}',\Delta')\cdot E'$$

together with $$K_{Y'}=g_Y^{*}K_Y +x\cdot E'$$ $$=g_Y^{*}f^{*}(K_{\mathcal{X}}+\Delta)+a(E,\mathcal{X},\Delta)\cdot g_Y^{*}E+x\cdot E' $$ for some $x$. We make the following Notation \ref{x} about the $x$ above.

\begin{notat}\label{x}
We denote the ramification index $x$ of $E$ along $E'$, which appears in the formula $$K_{Y'}=g_Y^{*}K_Y +x\cdot E',$$ as $x(E,E')$.
\end{notat}

Now around $e\in E\subset Y$ we have the local coordinate system $(x_E,x_2,...,x_n)$, where $(x_E=0)$ defines $E$ locally. And we may assume that $$r=x_E^{t_E}\big(f_0(x_2,...,x_n)+\{higher\ order\ terms\ in\ x_E\}\big).$$ So locally around $e'\in E'\subset Y'$ lying over $e$, $Y'$ will be the normalization of the following Equation \ref{normal}.
 \begin{equa}\label{normal}$$u^p+v\cdot u^{p+s}+\{higher\ order\ terms\}=x_E^{t_E}\big(f_0(x_2,...,x_n)$$$$+\{higher\ order\ terms\ in\ x_E\}\big).$$
  \end{equa}The proof of Proposition \ref{wild} is a detailed study of the normalization of Equation \ref{normal}.

\section{Proof of Proposition \ref{wild}}

Notation as in the statement of Proposition \ref{wild}, before computing the normalization of Equation \ref{normal}, we note that applying the divisorial valuation $v_{E'}$ to both sides of Equation \ref{normal}, one gets the following relation $$p\cdot v_{E'}(u) = t_E\cdot v_{E'}(x_E).$$ Since the base change $b'\in B' \to b \in B$ is of degree $p$ with $p$ as a prime number, $v_{E'}(x_E)=g_Y^{*}E|_{E'}$ is either $1$ or $p$, so we have the following Proposition \ref{formula}.
\begin{prop}\label{formula}(Comparison of Log Discrepancies)
Notation as in the statement of Proposition \ref{wild}, denote $\mathcal{X}_{b}$ and $\mathcal{X'}_{b'}$ respectively by $\Delta$ and $\Delta'$, then $a(E',\mathcal{X}', \Delta')$ is uniquely determined by $a(E,\mathcal{X},\Delta)$, $v_{E'}(x_E)$ and $x(E,E')$. And one has $2$ possible cases
\begin{itemize}
\item If $v_{E'}(x_E)=p$, then $v_{E'}(u)=t_E$ and: $$a(E',\mathcal{X}',\Delta')+1=p\cdot \big(a(E,\mathcal{X},\Delta)+1\big)+x(E,E')-st_E-p+1;$$
\item If $v_{E'}(x_E)=1$, then $p|t_E$, $v_{E'}(u)=t_E/p$ and: $$a(E',\mathcal{X}',\Delta')+1=\big(a(E,\mathcal{X},\Delta)+1\big)+x(E,E')-st_E.$$
\end{itemize}
\end{prop}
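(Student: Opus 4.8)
The plan is to read Proposition \ref{formula} off the two expressions for $K_{Y'}$ assembled just before the statement, by isolating the coefficient of $E'$ in each. Factoring through $\mathcal{X}'$ and rewriting $K_{\mathcal{X}'}+\Delta'$ by Proposition \ref{X'} gives
$$K_{Y'}=g_Y^{*}f^{*}(K_{\mathcal{X}}+\Delta)+s\cdot f'^{*}\Delta'+a(E',\mathcal{X}',\Delta')\cdot E',$$
whereas factoring through $Y$ gives
$$K_{Y'}=g_Y^{*}f^{*}(K_{\mathcal{X}}+\Delta)+a(E,\mathcal{X},\Delta)\cdot g_Y^{*}E+x(E,E')\cdot E'.$$
Subtracting cancels the common term $g_Y^{*}f^{*}(K_{\mathcal{X}}+\Delta)$ and leaves a single identity of divisors on $Y'$. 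Since $a(E',\mathcal{X}',\Delta')$ and $x(E,E')$ are by definition multiplicities of $E'$, everything reduces to finding the coefficient of $E'$ in $f'^{*}\Delta'$ and in $g_Y^{*}E$.

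Next I would identify those two coefficients valuatively. Because $\Delta'=\mathcal{X}'_{b'}$ is cut out near its generic point by the local parameter $u$ of $b'$, the multiplicity of $E'$ in $f'^{*}\Delta'$ is $v_{E'}(u)$; because $E$ is cut out by $x_E$, the multiplicity of $E'$ in $g_Y^{*}E$ is $v_{E'}(x_E)$. This gives the master identity
$$a(E',\mathcal{X}',\Delta')=v_{E'}(x_E)\cdot a(E,\mathcal{X},\Delta)+x(E,E')-s\cdot v_{E'}(u),$$
which already proves that $a(E',\mathcal{X}',\Delta')$ is determined by $a(E,\mathcal{X},\Delta)$, $v_{E'}(x_E)$ and $x(E,E')$ once $v_{E'}(u)$ is known. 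To pin down $v_{E'}(u)$ I would apply $v_{E'}$ to Equation \ref{normal}; comparing leading terms, and using that $f_0$ is a unit at the generic point of $E$, yields the relation $p\cdot v_{E'}(u)=t_E\cdot v_{E'}(x_E)$. Because the cover has prime degree $p$ and $v_{E'}(x_E)$ is the ramification index of $E'$ over $E$, it can only equal $p$ (giving $v_{E'}(u)=t_E$) or $1$ (forcing $p\mid t_E$ and $v_{E'}(u)=t_E/p$). Substituting each alternative into the master identity and adding $1$ to both sides produces the two cases of the proposition.

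The step I expect to be the genuine obstacle is justifying the value of $v_{E'}(u)$ in the split case $v_{E'}(x_E)=1$, where the valuative relation alone is not enough and one must analyze the normalization of Equation \ref{normal} honestly. With $p\mid t_E$, writing $t_E=p\,m$ and setting $w=u/x_E^{m}$ converts Equation \ref{normal} into $w^{p}=f_0(x_2,\dots,x_n)+\{\text{higher order terms}\}$; one must verify that this $w$ really belongs to the normalization --- that in characteristic $p$ the normalization amounts to adjoining exactly this $p$-th root of the unit $f_0$ --- and is therefore a unit along $E'$, so that $v_{E'}(w)=0$ and $v_{E'}(u)=m=t_E/p$. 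This local study is what makes the case dichotomy exhaustive and the resulting $a(E',\mathcal{X}',\Delta')$ well defined, so I would treat the structure of the normalization as the technical heart of the argument, with the divisor bookkeeping of the first two paragraphs as the formal skeleton.
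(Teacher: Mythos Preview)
Your approach is essentially the paper's: compare the two expressions for $K_{Y'}$ set up in the preliminaries, read off the $E'$-coefficients of $f'^{*}\Delta'$ and $g_Y^{*}E$ as $v_{E'}(u)$ and $v_{E'}(x_E)$, and then invoke the relation $p\cdot v_{E'}(u)=t_E\cdot v_{E'}(x_E)$ together with the degree-$p$ constraint to force $v_{E'}(x_E)\in\{1,p\}$. Your final paragraph is unnecessary worry, however: once $v_{E'}(x_E)=1$, that same valuative relation already gives $v_{E'}(u)=t_E/p$ (and forces $p\mid t_E$) with no further analysis of the normalization, and the paper indeed defers all such local study of Equation~\ref{normal} to the subsequent case-by-case proof of Proposition~\ref{wild}.
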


Now we study $Y'$ around $e'$ as follows.

\subsection{The Case $gcd(t_E,p)=1$}

Since $gcd(t_E,p)=1$, by Proposition \ref{formula} we have $v_{E'}(x_E)=p$, $v_{E'}(u)=t_E$, and we only need to compute $x(E,E')$ (Notaion \ref{x}).

Consider the Following Algorithm

\begin{itemize}
\item Step $0$: Let $t_E=l_0\cdot p+ r_1$, $1 \leq r_1<p$, $l_0 \geq 0$. Correspondingly, in the local coordinate system $(u,x_E,x_2...,x_n)$, we do a blow-up along the Weil divisor defined by the ideal sheaf $(x_E^{l_0},u)$, then we get a finite morphism $Y_0 \to Y\times_{B} B'$, where $Y_0$ is defined in the local coordinate system $(u_0=u/x_E^{l_0},v_0=x_E,x_2...,x_n)$ by the following equation $$u_0^p\big(1+v\cdot v_0^{l_0s} \cdot u_0^{s}+\{higher\ order\ terms\ in\ u_0 \}\big)=v_0^{r_1}\big(f_0(x_2,...,x_n)$$$$+\{higher\ order\ terms\ in\ x_E\}\big).$$

\item Step $1$: $p= l_1 \cdot  r_1 + r_2$, $1 \leq r_2 < r_1$, $ l_1>0$. Correspondingly, in the local coordinate system $(u_0,v_0,...,x_n)$, we do a blow-up along the Weil divisor defined by the ideal sheaf $(u^{l_1},v_0)$, then we get a finite morphism $Y_1 \to Y_0$, where $Y_1$ is defined in the local coordinate system $(u_1=u_0,v_1=v_0/u_0^{l_1},x_1...,x_n)$ by the following equation $$u_1^{rp_2}\big(1+v\cdot {v_1}^{l_0s} \cdot {u_1}^{l_1l_0s} \cdot u_1^{s}+\{higher\ order\ terms\ in\ u_1\}\big)=v_1^{r_1}\big(f_0(x_2,...,x_n)$$$$+\{higher\ order\ terms\ in\ x_E\}\big).$$
\item ...
\item Step $k$: $r_{k-1}= l_k \cdot r_k +r_{k+1}$, $1 \leq r_{k+1} < r_k$, $r_{k+1}=1$. We finally get $Y'$ in the local coordinate system $(u_k,v_k,...,x_n)$, which is defined either by the following equation $$u_k \big(1+v\cdot {v_k}^{Ns} \cdot {u_k}^{Ms}+\{higher\ order\ terms\ in\ u_k\}\big)=v_k^{r_{k-1}}\big(f_0(x_2,...,x_n)$$$$+\{higher\ order\ terms\ in\ x_E\}\big)$$ or by the following equation $$u_k^{r_{k-1}}\big(1+v\cdot {v_k}^{Ns} \cdot {u_k}^{Ms} +\{higher\ order\ terms\ in\ u_k\}\big)=v_k\big(f_0(x_2,...,x_n)$$$$+\{higher\ order\ terms\ in\ x_E\}\big).$$

\end{itemize}

In the first case $Y'$  has a local parameter system $(x_{E'}=v_k,x_2,...,x_n)$, where $(x_{E'}=0)$ defines $E'$. And in the second case $Y'$ has a local parameter system $(x_{E'}=u_k,x_2,...,x_n)$ where $(x_{E'}=0)$ also defines $E'$.

Tracing back the algorithm above, we can see that $$x_{E'}=x_E^N\cdot u^M$$ for some positive integer $N$ and $M$, and we have $pN+t_EM=1$. So we have $$x_E=\big(x_E^N\cdot u^M\big)^{p}\big(f_0(x_2,...,x_n)+\{higher \ order\ terms\ in\ x_{E'}\}\big).$$

\begin{rem}
This directly shows that $v_{E'}(x_E)=p$ if $gcd(t_E,p)=1$.
\end{rem}

Namely, we will have $$u=x_{E'}^{t_E}\cdot \big({f'}_0(x_2,...,x_n)+\sum_{i\geq 1} {f'}_i(x_2,...,x_n) \cdot x_{E'}^i\big),$$ where ${f'}_i \in k[[x_2,...,x_n]]$ for each $i\geq 0$.
Inserting this into Equation \ref{normal}, we have $$u^{p}\big(1+x_{E'}^{t_Es}\cdot {f'}_0^s+\{higher\ order\ terms\ in\ x_{E'}\}\big)=x_E^{t_E}\big(f_0(x_2,...,x_n)$$ $$+\{higher\ order\ terms\ in\ x_E\}\big).$$ Now we apply the differential $d$ to both sides of the above equation and then wedge with $dx_2 \wedge dx_3...\wedge dx_n$, we get $$t_Es\cdot u^{p} \cdot x_{E'}^{t_Es-1}\cdot {f'}_0^s \cdot dx_{E'}\wedge dx_2...\wedge dx_n=t_E\cdot f_0 \cdot x_E^{t_E-1}\cdot  dx_{E}\wedge dx_2...\wedge dx_n.$$ This implies that $$K_{Y'}=g_Y^{*}K_Y +\big(t_E(p+s)-p(t_E-1)-1\big)\cdot E',$$ and hence we have (Notation \ref{x}) $$x(E,E')=t_E(p+s)+p(t_E-1)-1.$$ So by Proposition \ref{formula}, we have $$a(E',\mathcal{X}',\Delta')+1=p\cdot \big(a(E,\mathcal{X},\Delta)+1\big) \geq 0,$$ which means $(\mathcal{X}',\mathcal{X'}_b)=(\mathcal{X}',\Delta')$ is log canonical at the center of $E$.

\subsection{The Case $p|t_E$: The Induction}
%First we illustrate the complexity of this case by the following example:
%\begin{ex}$$r=x_E^{p}(1+x_E^{s+1} \cdot x_2\cdot ...\cdot x_n)$$$$r=u^p(1+u^s)$$\end{ex}

In this case there is an integer $N>0$ such that $t_E=pN$. Suppose we have the following expansion $$r=x_E^{pN}\big(G(x_2,...,x_n)+H(x_E,x_2,...,x_n)\big),$$

where $G\in k[[x_2,...,x_n]]$ and $H \in (x_E,x_2,...,x_n)k[[x_E,x_2,...,x_n]]$. Now we make the following Claim \ref{claim}.

\begin{claim}\label{claim}
In the case $p|t_E$, we can reduce to the following standard situation $$r=x_E^{pN}\big(f_0(x_2,...,x_n)+x_E^{p}f_1(x_2,...,x_n)+ ...+x_E^{pM}f_M(x_2,...,x_n)$$$$+x_E^{pM+s'}f_{M+1}(x_2,...,x_n)+ \{higher \ order\ terms \ in \ x_E\}\big),$$ where $0<s'<p$ and the right hand side(RHS) of the above expansion contains a monoid in $k[x_E,x_2,x_3,...,x_n]$ which does not belong to $k[x_E,x_2^p,x_3^p,...,x_n^p]$, i.e.~there is an integer $i_0$ with $2\leq i_0 \leq n$ such that $\partial_{i_0} RHS \neq 0$.
\end{claim}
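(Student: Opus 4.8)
The plan is to put the expansion $r=x_E^{pN}(G+H)$ into the displayed normal form by combining coordinate changes on $Y$ with the extraction of $p$-th powers, using that $k=\bar k$ is perfect (so any monomial in $k[[x_E,x_2,\dots,x_n]]$ all of whose exponents are divisible by $p$ is a genuine $p$-th power) and that in characteristic $p$ one has $d(F^p)=0$. Writing $\mathrm{RHS}=\sum_{a\ge 0}x_E^{a}g_a(x_2,\dots,x_n)$ with $g_0=f_0\ne 0$, the basic observation is that $dr=x_E^{pN}\,d(\mathrm{RHS})$, since the factor $pN\equiv 0$ in $k$ kills the $dx_E$-contribution of $x_E^{pN}$; thus $d(\mathrm{RHS})$ detects precisely the non-$p$-th-power part of $\mathrm{RHS}$.

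First I would split $\mathrm{RHS}=Q^{p}+P$ canonically, where $Q^{p}$ gathers exactly the monomials $x_E^{a}x_2^{\alpha_2}\cdots x_n^{\alpha_n}$ all of whose exponents are divisible by $p$ (so that $Q$ is a well-defined power series by perfectness of $k$), and $P$ gathers the monomials in which at least one exponent is prime to $p$. Then $d(\mathrm{RHS})=dP$, so $P$ carries all the wildness. If $P=0$ then $r$ is a $p$-th power compatible with the $x_E$-adic filtration and the discrepancy computation for $E'$ degenerates to that of a purely inseparable base change along $E$, which is covered by the hypothesis of Proposition \ref{wild}; so from now on I may assume $P\ne 0$.

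Next I would read off the two data the standard form requires. The exponent $s'$ is the offset of the lowest $x_E$-degree prime to $p$ carrying a nonzero coefficient: writing that degree as $pM+s'$ gives $0<s'<p$ automatically, and the coefficients at the intervening $p$-divisible degrees $0,p,\dots,pM$ are recorded as $f_0,\dots,f_M$ (some possibly zero), which is exactly the shape displayed. The index $i_0$ records transverse wildness: if some $x_{i_0}$, $i_0\ge 2$, occurs to a power prime to $p$ in any coefficient, then $\partial_{i_0}\mathrm{RHS}=\partial_{i_0}P\ne 0$. A generic linear change of the transverse coordinates $x_2,\dots,x_n$ spreads the wildness of $P$ across these variables and, whenever $P$ is not supported in the $x_E$-direction alone, produces such an $i_0$; this gives the asserted standard situation.

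The main obstacle is exactly the configuration in which no such $i_0$ exists, i.e. when every $x_i$ with $i\ge 2$ occurs only to $p$-th powers so that $\partial_{x_i}\mathrm{RHS}=0$ for all $i\ge 2$ and no transverse coordinate change can help. In that degenerate sub-case perfectness of $k$ lets me write $\mathrm{RHS}=\sum_{j=0}^{p-1}x_E^{j}\Phi_j^{p}$ with $\Phi_0\ne 0$ and $s'$ the least $j\ge 1$ with $\Phi_j\ne 0$; factoring $r=(x_E^{N}\Phi_0)^{p}\bigl(1+x_E^{s'}(\Phi_{s'}/\Phi_0)^{p}+\cdots\bigr)$ with $\gcd(s',p)=1$ exhibits $s'$ as a new conductor, and the normalization is resolved by the same weighted blow-up (continued-fraction) algorithm as in the coprime case $\gcd(t_E,p)=1$, again giving $a(E',\mathcal{X}',\Delta')+1\ge 0$ directly. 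Thus either the transverse nondegeneracy can be arranged, placing us in the standard situation for the induction of the next subsection, or we fall into this coprime-type computation; the one point I would check in full is that extracting $Q^{p}$ and running the blow-up algorithm commute with forming the normalization of Equation \ref{normal}, which is where the real care lies.
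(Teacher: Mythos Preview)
There is a genuine gap. Your scheme first reads off $s'$ as ``the offset of the lowest $x_E$-degree prime to $p$ carrying a nonzero coefficient'' and only then checks the transverse condition $\partial_{i_0}\mathrm{RHS}\ne 0$. But there is a third configuration you never reach: the expansion may contain \emph{only} $p$-divisible powers of $x_E$,
\[
r=x_E^{pN}\bigl(f_0+x_E^{p}f_1+\cdots+x_E^{pM}f_M\bigr),
\]
with some $f_i\notin k[[x_2^p,\dots,x_n^p]]$. Here $P\ne 0$ (so you do not fall into your ``$r$ is a $p$-th power'' branch), there is an $i_0$ with $\partial_{i_0}\mathrm{RHS}\ne 0$ (so you are not in your degenerate sub-case either), yet there is \emph{no} $s'$ to read off, and the claimed standard form demands a nonzero $f_{M+1}$ at exponent $pM+s'$ with $0<s'<p$. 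No coordinate change in $x_2,\dots,x_n$ manufactures such a term; something has to move the $x_E$-exponent off a multiple of $p$.

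This is exactly where the paper's argument diverges from yours and where the hypothesis of Proposition~\ref{wild} about \emph{purely inseparable} base changes is actually spent. The paper does not try to put the given expansion into standard form by manipulations on $Y$ alone; instead it replaces the family by a purely inseparable base change $r=u^{p^{\bullet}}$ followed by a blow-up (along $(u,x_E)$ in the case above, along $(u,f_0')$ in your ``degenerate'' case), and observes that the new family has an expansion of the claimed shape. The reduction is then legitimate precisely because log canonicity after purely inseparable base change is assumed. Your proposal never invokes this hypothesis (except in the impossible $P=0$ branch, which in fact cannot occur since $f$ is birational), so the mechanism that forces an $s'$ to appear is missing. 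Separately, your claim that the degenerate sub-case $\mathrm{RHS}=\sum_j x_E^{j}\Phi_j^{p}$ is ``resolved by the same algorithm as in the coprime case'' is not the route the paper takes and would need its own justification: after the substitution $u_2=u_1-\Phi_0$ one is comparing the two competing lowest orders $Ns$ and $s'$, which is the start of a new induction, not a rerun of the $\gcd(t_E,p)=1$ Euclidean algorithm.
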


\begin{proof}

We first observe that either $G$ or $H$ in the expansion
$$r=x_E^{pN}\big(G(x_2,...,x_n)+H(x_E,x_2,...,x_n)\big)$$ above does not belong to $k[x_E^p,x_2^p,...,x_n^p]$. Otherwise we would have $$r=\big(x_E^NG'(x_2,...,x_n)+H'(x_E,x_2,...,x_n)\big)^p$$ for some $H',G'\in k[x_2,...,x_n]$. Then let $(r,y_2,...,y_n)$ be a local coordinate system around a smooth point, we would get $$f^{*}dr\wedge dy_2 \wedge ...\wedge dy_n=0,$$ which is impossible since $f$ is a proper bi-rational morphism.

Now assume that $r=x_E^{pN}\big(G(x_2,...,x_n)+H(x_E,x_2,...,x_n)\big)$ as expanded above, can not be directly expressed in the form as claimed. Since either $G$ or $H$ does not belong to $k[x_E^p,x_2^p,...,x_n^p]$, we have the following two possible cases

{\bf{Case 1}}---we have the following form of expansion $$r=x_E^{pN}\big(f_0(x_2,...,x_n)+x_E^{p}f_1(x_2,...,x_n)+ x_E^{2p} f_2(x_2,...,x_n)$$ $$+x_E^{pM}f_M(x_2,...,x_n)\big),$$ where at least one of $f_i$, with $1\leq i\leq M$, does not belong to $k[x_2^p,...,x_n^p]$.

{\bf{Case 2}}---we have the following form of expansion $$r=x_E^{pN}\big(f_0(x_2,...,x_n)+x_E^{p}f_1(x_2,...,x_n)+ ...+x_E^{pM}f_M(x_2,...,x_n)$$$$+x_E^{pM+s'}f_{M+1}(x_2,...,x_n)+ \{higher \ order\ terms \ in \ x_E\}\big),$$ where $0<s'<p$ and the right hand side(RHS) of this expansion belongs to $k[x_E,x_2^p,x_3^p...,x_n^p]$, i.e.~$\partial_{i} RHS = 0$ for any $i$ with $2\leq i \leq n$.

In the first case, we can make the purely inseparable base change $r=u^{pN}$, then do a blow-up along the Weil divisor defined by the ideal sheaf $(u,x_E)$. In the second case, suppose $f_0(x_2,...,x_n)\neq 0$(else $t_E$ will be strictly larger), and $f_0(x_2,...,x_n)=(f'_0(x_2,...,x_n))^{pL}$ where $L$ is a positive integer and $f'_0(x_2,...,x_n)$ in $k[x_2,x_3,...,x_n]$ does not belong to $k[x_2^p,x_3^p,...,x_n^p]$, we can make the purely inseparable base change $r=u^{pL}$, then do a blow-up along the Weil divisor defined by the ideal sheaf $(u, f_0')$. In both cases the corresponding purely inseparable base change and blow-up will result in an expansion of $r$ which is in the form as claimed.

Now by our assumptions in Proposition \ref{wild}, $(\mathcal{X'},\mathcal{X'}_b)=(\mathcal{X'},\Delta')$ is log canonical for any purely inseparably base change $b'\in B' \to b \in B$ where $\mathcal{X}' =\mathcal{X} \times_B B'$. So in order to prove Proposition \ref{wild} in the case  $p|t_E$, it suffices to prove Proposition \ref{wild} for the purely inseparably base-changed families in the above two cases, thus our claim is verified.

\end{proof}
By Claim \ref{claim} above, we may assume $$r=x_E^{pN}\big(f_0(x_2,...,x_n)+x_E^{p}f_1(x_2,...,x_n)+ ...+x_E^{pM}f_M(x_2,...,x_n)$$$$+x_E^{pM+s'}f_{M+1}(x_2,...,x_n)+ \{higher \ order\ terms \ in \ x_E\}\big),$$
where $f_i$'s on the right hand side(RHS) are possibly zero elements in $k[[x_2,...,x_n]]$, $f_{M+1}$ is non-zero in $k[[x_2,...,x_n]]$, $0<s<p$, and there is an integer $i_0$ with $2\leq i_0 \leq n$ such that $\partial_{i_0} RHS \neq 0$.

Now we do a blow-up along the Weil divisor defined by the ideal sheaf $(u,x_E^N)$, then we can get a finite map around $e'$: $Y_0\to Y\times_B B'$, where $Y_0$ is defined in the local coordinate system $(u_1,x_E,...,x_n)$ by the following Equation \ref{equ2}.
\begin{equa}\label{equ2}
$$u_1^p\big(1+u_1^s\cdot x_E^{Ns}+\{higher\ order \ terms \ in\  u_1\cdot x_E^N\}\big)= f_0(x_2,...,x_n)$$$$+x_E^{p}f_1(x_2,...,x_n)+...+x_E^{pM}f_M(x_2,...,x_n)$$$$+x_E^{pM+s'}f_{M+1}(x_2,...,x_n)+ \{higher \ order\ terms \ in \ x_E\}$$
\end{equa}
Now there are two possible cases remained, as described and analyzed in the following.
\subsubsection{The case $\partial_{i_0} f_0 \neq 0$ for some $i_0$ with $2\leq i_0 \leq m$}

In this case, we may assume that $i_0=2$. Then we can see that locally around $e'\in E' \subset Y'$, $Y'$ is defined by Equation \ref{equ2}, with $E'$ defined by $(x_E=0)$. Now we have a generator of $K_{Y'}$ given by $$dx_E\wedge du_1 \wedge dx_3...\wedge dx_n.$$ We apply the differential $d$ to both sides of Equation \ref{equ2}, and then wedge with $dx_E \wedge dx_3...\wedge dx_n$. We get the following $$K_{Y'}=g_Y^{*}K_Y +Ns\cdot E',$$ together with $$g_Y^{*}E=E',$$$$f'^{*}\Delta'=N\cdot E'.$$ So we have $$Ns+a(E,X,\Delta)=a'(E',X',\Delta')+Ns.$$ Namely, we have $$ a'(E',X',\Delta')=a(E,X,\Delta)\geq -1,$$ which proves Proposition \ref{wild} in this case.
\subsubsection{The case $f_0={f'}_0^p$ for some ${f'}_0 \in k[[x_2,...,x_n]]$}
In this case, we can do a re-parametrization of Equation \ref{equ2} defined by $u_2=u_1-{f'}_0$, then we get the following Equation \ref{equ3}.

\begin{equa}\label{equ3}
$$u_2^p+(u_2+{f'}_0)^{p}\cdot\bigg(\big((u_2+{f'}_0)\cdot x_E^N\big)^s+\{higher\ order \ terms \ in \ (u_2+{f'}_0)\cdot x_E^N \}\bigg)$$ $$= x_E^{p}f_1(x_2,...,x_n)+...+x_E^{pM}f_M(x_2,...,x_n)$$$$+x_E^{pM+s'}f_{M+1}(x_2,...,x_n)+ \{higher \ order\ terms \ in \ x_E\}.$$
\end{equa}

Now for the induction process to work we make the following Definition \ref{I}.
\begin{defn}\label{I}
For an equation $f$ with the form of Equation \ref{equ3}, we define $\mathcal{I}(f)$ as follows
\begin{itemize}
\item If $f_1=f_2=...=f_{M}=0$, $$\mathcal{I}(f)=pM+s';$$
\item If $f_i \neq 0$ for some $i$ with $1\leq i \leq M$, $$\mathcal{I}(f)= min \{pi|f_i \neq 0, 1\leq i \leq M \}.$$
\end{itemize}
\end{defn}

Now the whole proof of Proposition \ref{wild} can be completed once the following detailed analysis of all the possible remaining cases is finished.

{\bf{Case 1}}---The case $gcd(p,N)=1$ and $Ns < \mathcal{I}$: in this case we have to deal with the normalization of an equation of the form $$u_2^p=x_E^{Ns}\big(({f'}_0^{p+s}+ \{higher \ order \ terms \ in\ u_2\})+\{higher \ order\ terms\ in\ x_E\}\big).$$
    Since $gcd(p,Ns)=1$, we can repeat our algorithm in ``the case $gcd(p,t_E)=1$" that we have discussed before, and conclude that $Y'$ has a local parameter system $(x_{E'},x_2,...,x_n)$ around $e'$ , where $(x_{E'}=0)$ defines $E'$. And there is a positive integer $L$ (which satisfies $gcd(p,L)=1$ by tracing back the algorithm) such that $$v_{E'}(x_E)=pL,$$ $$v_{E'}(u)=NL.$$ Now we apply the differential $d$ to both sides of Equation \ref{equ3}, and then wedge with $dx_2 \wedge dx_3 ...\wedge dx_n$. Since $u_1=u_2-{f'}_0$ is locally a unit and $gcd(p,s)=1$, we get the following relation $$x_{E'}^{NsLp+NsL-1}\cdot dx_{E'}\wedge dx_2 ...\wedge dx_n = c \cdot  x_{E'}^{(Ns-1)Lp}\cdot dx_E \wedge dx_2...\wedge dx_n ,$$ where $c$ is a constant. So we have $$x=NsL+Lp-1.$$ Then we have $$sNL+a(E',X',\Delta')=pL\cdot a(E,X,\Delta)+NsL+Lp-1.$$ This implies that $$a(E',X',\Delta')+1=pL\cdot (a(E,X,\Delta)+1)\geq 0.$$ So Proposition \ref{wild} is proved in this case.

{\bf{Case 2}}---The case when $gcd(p,N)=1$ and $Ns=\mathcal{I}$: in this case we can see that $$f_1=f_2=...=f_{M}=0,$$ and $$Ns=pM+s'$$ by Definition \ref{I}.

Now the equation that we have to study is locally of the form $$u_2^p=x_E^{Ns}\bigg(\big(({f'}_0^{p+s}-f_{M+1})+ \{higher \ order \ terms \ in\ u_2\}\big)$$ $$+\{higher \ order\ terms\ in\ x_E\}\bigg).$$

Furthermore, there are two sub-cases in this case as follows.

{\bf{Sub-Case 1}}---If ${f'}_0^{p+s}-f_{M+1}\neq 0$, then this situation can be  reduced to the case which  we just discussed before.

{\bf{Sub-Case 2}}---If ${f'}_0^{p+s}-f_{M+1}= 0$, then we have to deal with the normalization of the following equation
    $$u_2^p=x_E^{pM+s'}\big(s\cdot u_2\cdot {{f'}_0}^{p+s-1}+\{higher\ order\ terms\ in\ u_2\}\big)$$$$+\{higher\ order\ terms\ in \ x_E\ including \ {{f'}_0}^{p+s+1}\cdot x_E^{pM+s'+N}\}.$$
    We do a blow-up along the Weil divisor defined by the ideal sheaf $(u_2,x_E^{M})$, then we can reduce to the following equation $$u_3^p=x_E^{s'}\big(s\cdot u_3\cdot x_E^{M}\cdot {{f'}_0}^{p+s-1}+\{higher\ order\ terms\ in\ u_3 \cdot x_E^{M}\}\big)$$ $$+\{higher\ order\ terms\ in \ x_E\ including \ {{f'}_0}^{p+s+1}\cdot x_E^{s'+N}\}.$$

    Now an algorithm similar to what we used in ``the case $gcd(p,t_E)=1$" can deal with the situation where $N\geq M$. And when $N<M$, the leading term of the right hand side of the equation above will not contain $u_3$. Hence we can further reduce to the following equation $$u_3^p=x_E^{s'+s''}\big({f''}_0(x_2,...,x_n)+...+{{f'}_0}^{p+s+1}\cdot x_E^{N-s''}+\{higher\ order\ terms\ in\ x_E\}\big).$$
    This new equation has a smaller degree of leading term in $x_E$ and a strictly smaller $\mathcal{I}$, so by our induction this sub-case can be further reduced to a situation where the corresponding normalization gives a discrepancy formula in either one of the following form, i.e. $$a(E',X',\Delta')+1\geq pL\cdot \big(a(E,X,\Delta)+1\big) +\big(Ns-(N+s')\big)L$$ $$>pL\cdot \big(a(E,X,\Delta)+1\big)\geq 0,$$ or $$Ns+a(E,X,\Delta)=a'(E',X',\Delta')+Ns.$$ If the discrepancy formula is of the second form above, we would have $$a'(E',X',\Delta')=a(E,X,\Delta)\geq -1,$$ which would arise if $p|(s'+s'')$ and $\partial_i {f''}_0(x_2,...,x_n)\neq 0$ for some $i$ with $2\leq i\leq n$, or if similar cases arise in the full reduction process.

So Proposition \ref{wild} is proved in this case.

{\bf{Case 3}}---The case $gcd(p,N)=1$ and $Ns>\mathcal{I}$: there are two further sub-cases in this case, as follows.

{\bf{Sub-Case 1}}---If $f_i = 0$ for all $i$ with $1\leq i \leq M$, then we have $s>s'$ by definition. By the same argument as we just discussed before, we get the following equation  $$a(E',X',\Delta')+1=pL\cdot \big(a(E,X,\Delta)+1\big) +N(s-s')L$$ $$>pL\cdot \big(a(E,X,\Delta)+1\big)\geq 0.$$ So Proposition \ref{wild} is  proved in this sub-case.

{\bf{Sub-Case 2}}---If $f_i \neq 0$ for some $i$ with $1\leq i \leq M$, then let  $i_0$ be the smallest among such $i$'s, we would have $I=pi_0$. Now we do a blow-up along the Weil divisor defined by the ideal sheaf $(u_2, x_E^{i_0})$, then we get a new Equation \ref{equ4}.
     \begin{equa}\label{equ4}
     $$u_3^p+(u_3\cdot x_E^{i_0}+{f'}_0)^{p}\cdot\big((u_3\cdot x_E^{i_0}+{f'}_0)\cdot x_E^N\big)^s+\{higher\ order \ terms \ in $$

      $$(u_3\cdot x_E^{i_0}+{f'}_0)\cdot x_E^N \}= f_{i_0}(x_2,...,x_n)+\sum_{f_i \neq 0, i_0 \leq i \leq M} x_E^{p(i-i_0)}f_i(x_2,...,x_n)$$

      $$+x_E^{p(M-i_0)+s'}f_{M+1}(x_2,...,x_n)+ \{higher \ order\ terms \ in \ x_E\}.$$
     \end{equa}
    Now if $f_{i_0}\neq {f'}_{i_0}^p$ for any $ {f'}_{i_0}\in k[[x_2,...,x_n]]$ in the Equation \ref{equ4} above, then this situation can be reduced to ``the case $\partial_{i_0} f_0 \neq 0$ for some $i_0$ with $2\leq i_0 \leq m$" which we have discussed before, so Proposition \ref{wild} is proved in this situation.

     Else if $f_{i_0}= {f'}_{i_0}^p$ for some $ {f'}_{i_0}\in k[[x_2,...,x_n]]$ in Equation \ref{equ4} above, then we can do a re-parametrization defined by $u_4=u_3- {f'}_{i_0}$, and thus reduce to a new Equation \ref{equ5}.
     \begin{equa}\label{equ5}

     $$u_4^p+\big((u_4+{f'}_{i_0})\cdot x_E^{i_0}+{f'}_0\big)^{p}\cdot\bigg(\big((u_4+{f'}_{i_0})\cdot x_E^{i_0}+{f'}_0\big)\cdot x_E^N\bigg)^s+\{higher\ order$$ $$\ terms \ in \ \big((u_4+{f'}_{i_0})\cdot x_E^{i_0}+{f'}_0\big)\cdot x_E^N \}=\sum_{f_i \neq 0, i_0 \leq i \leq M} x_E^{p(i-i_0)}f_i(x_2,...,x_n)$$$$+x_E^{p(M-i_0)+s'}f_{M+1}(x_2,...,x_n)+ \{higher \ order\ terms \ in \ x_E\}.$$

      \end{equa}

      Equation \ref{equ5} above  has a strictly smaller $\mathcal{I}$, and hence this situation can also be reduced to one of the cases or sub-cases we have discussed before in this induction process.

So Proposition \ref{wild} is also proved in this case.

{\bf{Case 4}}---The case when $p|N$: there are three further sub-cases in this case, as follows.

{\bf{Sub-Case 1}}---If $Ns<\mathcal{I}$: assume $N=p^{i_0}q$ for some positive integer $i_0$ such that $gcd(p,q)=1$, and let $N'=N/p$, then we have to deal with the following equation $$u_2^p=x_E^{p^{i_0}qs}\big(({f'}_0^{p+s}+ \{higher \ order \ terms \ in\ u_2\})+\{higher \ order\ terms\ in\ x_E\}\big).$$  We do a blow-up along the Weil divisor defined by the ideal sheaf $(u_2, x_E^{N'}=x_E^{p^{i_0-1}qs}),$ then we get a new Equation \ref{equ6}.
      \begin{equa}\label{equ6}
      $$u_3^p=\big({f'}_0^{p+s}+ \{higher \ order \ terms \ in\ u_3\cdot x_E^{N's}\}\big)+ \big(x_E^{p}f_1(x_2,...,x_n)$$ $$+x_E^{pM}f_M(x_2,...,x_n)+x_E^{pM+s'}f_{M+1}(x_2,...,x_n)+ \{higher \ order\ terms \ in \ x_E\}\big).$$
      \end{equa}
      Now if ${f'}_{0}\neq {f''}_{0}^p$ for any $ {f''}_{0}\in k[[x_2,...,x_n]]$ in the Equation \ref{equ6} above, then this situation can be reduced to ``the case $\partial_{i_0} f_0 \neq 0$ for some $i_0$ with $2\leq i_0 \leq m$" which we have discussed before, so Proposition \ref{wild} is proved in this situation.

       Else suppose ${f'}_{0}= {f''}_{0}^p$ for some $ {f''}_{0}\in k[[x_2,...,x_n]]$ in the Equation \ref{equ6} above, then we can do a re-parametrization defined by $u_4=u_3- {{f''}_{0}}^{p+s}$, and thus reduce to a new Equation \ref{equ7}.

        \begin{equa}\label{equ7}
        $$u_4^p=\big(0+ \{higher \ order \ terms \ in\ (u_4+{{f''}_{0}}^{p+s})\cdot x_E^{N's}\}\big)+ \big(x_E^{p}f_1(x_2,...,x_n)$$¡¡$$+ x_E^{pM}f_M(x_2,...,x_n)+x_E^{pM+s'}f_{M+1}(x_2,...,x_n)+ \{higher \ order\ terms \ in \ x_E\}\big).$$

        \end{equa}

         Equation \ref{equ7} above  has a strictly smaller $\mathcal{I}$, and hence this situation can also be reduced to one of the cases or sub-cases we have discussed before in this induction process.

%Equation \ref{equ7} above can be reduced to a case with a strictly smaller $gcd(N,p)$ and strictly smaller $\mathcal{I}$ as we defined before, so this situation can also be reduced to one of the previous cases or sub-cases which we have discussed in this induction process.

{\bf{Sub-Case 2}}---If $Ns=\mathcal{I}$, then since $p|N$, as we have discussed before---we can reduce to a new equation having the same form of Equation \ref{equ5} with a strictly smaller $\mathcal{I}$, and hence this situation can also be reduced to one of the cases or sub-cases we have discussed before in this induction process.

{\bf{Sub-Case 3}}---If $Ns>I$: this situation can be  immediately  reduced to one of the cases or sub-cases we have discussed before in this induction process.

%\item {\bf{Sub-Case 4}}---The only remained sub-cases all belong to, or are essentially equivalent to the case of a purely inseparable base-change, where Proposition \ref{wild} is OK by the assumption.

So Proposition \ref{wild} is proved in this final case. Q.E.D.

%%By the arguments in the previous section, we know that this conclusion implies the uniqueness of equilibrium--at least in $N=1$ and $N=2$ cases.

%%%%%%%%%%%%%%%%%%%%%%%%%%%%%%%%%%%%%%%%%%%%%%%%%%%%%%%%%%%%%%%%

\end{document}